\newtheorem{theorem}{Theorem}[section]
\newtheorem{lemma}[theorem]{Lemma}
\newtheorem{assumption}[theorem]{Assumption}
\newtheorem{definition}[theorem]{Definition}
\newtheorem{remark}[theorem]{Remark}
\newcommand{\crb}{\color{black}}
\newcommand{\crr}{\color{black}}
\newcommand{\crg}{\color{black}}
\title{\LARGE \bf
Distributed Convex Optimization in Networks of Agents with Single-integrator Dynamics}
\author{ Amir Adibzadeh, Mohsen Zamani,  Amir A. Suratgar, and Mohammad B. Menhaj
\thanks{M. Zamani is with the School of Electrical Engineering and Computer Science, The University of Newcastle, Callaghan, NSW 2308, Australia. e-mails:{\tt\small  mohsen.zamani@newcastle.edu.au.}}
\thanks{A. Adibzadeh, A. A. Suratgar and M. B. Menhaj  are with the Electrical Engineering
	Department, Amirkabir University of Technology, Tehran 15914, Iran. e-mail:{\tt\small \{amir.adibzadeh,a-suratgar,menhaj\}@aut.ac.ir.}}}
\begin{document}

\maketitle
\thispagestyle{empty}
\pagestyle{empty}

{\crg
	\begin{abstract}
		In this paper, distributed convex optimization problem
		over undirected dynamical networks is studied. Here, networked agents
		with single-integrator dynamics are supposed to rendezvous at a point
		that is the solution of a global convex optimization problem with some local
		inequality constraints. To this end, all agents shall cooperate with
		their neighbors to seek the optimum point of the network's global  objective function. A distributed optimization algorithm based on the interior-point
		method is proposed, which combines an optimization algorithm with
		a nonlinear consensus protocol to find the optimum value of the global objective function. {\crb We tackle
			this problem by addressing its subproblems, namely a consensus problem
			and a convex optimization problem. Firstly, we propose a saturation protocol for the consensus subproblem. Then to solve the distributed optimization part, we implement a centralized control law, which yields the optimum value of the global objective function,  in a distributed fashion with the help of a distributed estimator.  Convergence analysis for the proposed protocol } based
		on the Lyapunov stability theory for time-varying nonlinear systems is
		included. A simulation example is given at the end to illustrate the
		effectiveness of the proposed algorithm.
	\end{abstract}
}
\section{INTRODUCTION}

{\crb
	In recent years,  developing distributed  paradigms  for solving  optimization problems among interconnected agents has attracted attention of researchers. 
	We briefly review some of the existing  works in this area.}

The authors of \cite{raffard2004distributed} used {\crr the dual decomposition scheme, which maintains a small duality gap,} to solve optimization problems
in a network of dynamical nonlinear agents. { The reference \cite{nedic2009distributed}  exploited a {\crr subgradient-based} distributed method to find the approximation of an optimal point associated with a collective convex function over a network
	of interconnected agents. The paper \cite{lu2012zero} proposed
	a zero-gradient-sum continuous-time algorithm  to drive the states of a weight-balanced directed network to {\crr the optimal point of a} global objective function along an invariant zero-gradient-sum manifold. The references \cite{yuan2011distributed,yi2015distributed,kia2015distributed} exploited   { \crr the dual decomposition method} to deal with distributed optimization problems with inequality
	and equality constraints { \crr over networks.} In these studies, to find the saddle point of {\crr the Lagrangian corresponding with the original  optimization problem},
a distributed {\crr continuous-time gradient-based dynamics was developed for primal and dual
decision variables associated with each agent.}   The
paper \cite{qiu2016distributed} utilized  a consensus protocol  to tackle a distributed optimization problem for networks, { in which   agents   share a  common convex constraint. A comparison between the dual decomposition-based method and the consensus-based method
	for distributed optimization in networked systems  was studied in \cite{droge2014continuous}. {\crr The authors} combined these two methods and developed a continuous-time
	proportional-integral distributed gradient-based technique. In order to solve constrained optimization problems and attain  the saddle
	point of the corresponding Lagrangian function, proper dynamics associated with primal and dual variables were developed in \cite{feijer2010stability} that yield the optimal  solution. In the above mentioned works, the complexity associated with the implementation of proposed algorithms increases as the number of agents or that of constraints corresponding with them becomes greater.
}

In this paper, we consider the constrained distributed optimal problem for single-integrator networks, where each agent has a convex objective function and personalized  inequality constraints.
To solve the problem, we divide it into {\crr a consensus problem} and a
distributed optimization one. To deal with the former problem, we
utilize a continuous  consensus protocol based
on local information sharing. Then, we exploit a distributed optimization algorithm based on the interior-point method to solve
the convex optimization problem.\textcolor{black}{{} {\crb In the proposed
		algorithm, no Lagrangian variables associated with the consensus constraint, {\crr which is  needed so that all agents attain the same optimum solution},
		and  {\crr the} local inequality constraints are required.
		This reduces the complexity of the proposed solution and its implementation.  Moreover, the proposed algorithm can handle the limitations associated with actuator saturation  that commonly occurs in practice.  }}

{\crb
	 This paper is structured as follows. The next section reviews some background materials required in this paper. The problem formulation and main results are introduced in  Section \ref{sec:main}. Finally, Section \ref{sec:conclude} concludes the paper.
}

\section{Preliminaries and Notations}
{\crb In this section, we briefly review some background materials required in this paper.}

\subsection{Graph Theory}

In graph theory, $\mathcal{{G}=\left\{ \text{\ensuremath{\mathcal{N\text{,}E\text{,}A}}}\right\} }$
denotes an undirected network, where $\mathcal{{N}}=\{1,\cdots,N\}$
is the set of nodes. An edge between node $i$ and node $j$ is denoted by
the pair $(i,j)\in\mathcal{{E}}$ that indicates mutual communication
between two nodes $i$ and $j$. The set  $\mathcal{{E}}\subseteq\mathcal{N\times N}$
represents the set of edges, and $\mathcal{{A}=}[a_{ij}]_{N\times N}$
is the adjacency matrix. $\mathcal{{A}}$ is symmetric {\crb and}  $a_{ij}=1$
when $(i,j)\in\mathcal{{E}}$, and $a_{ij}=0$ indicates $(i,j)\notin\mathcal{{E}}$.
It is assumed that there is no repeated edge and no self-loop, i.e.
$a_{ii}=0$.  The set of neighbors of node $i$
is denoted by $\mathcal{{N_{\text{i}}}}=\{j\in\mathcal{{V}}:(i,j)\in\mathcal{{E}}\}$.
Assume an arbitrary orientation for each edge in $\mathcal{{G}}$,
then $D=[d_{ik}]\in\mathbb{{R}}^{N\times|\mathcal{{E}}|}$ is the
incidence matrix associated with $\mathcal{{G}}$, in which $d_{ik}=-1$
if the edge $(i,j)$ leaves node $i$, $d_{ik}=1$ if it enters the
node, and $d_{ik}=0$ otherwise. The Laplacian matrix $L=[l_{ij}]\in\mathcal{\mathbb{{R}}}^{N\times N}$
associated with the graph $\mathcal{{G}}$ is defined as $l_{ii}=\sum_{j=1,j\neq i}^{N}a_{ij}$
and $l_{ij}$$=-a_{ij}$ for $i\neq j$ . \textcolor{black}{Note that
	$L=DD^{\top}$. The Laplacian matrix $L$ is semi-positive {\crb definite, and  if} $\mathbf{1\in\mathbb{R}}^{n}$ denotes a vector of which entities
	are all 1, then, $L\mathbf{1}=\mathbf{0}$ and $\mathbf{1}^{\top}L=\mathbf{0}$. }{\crb The Laplcian matrix $L$ has one zero eigenvalue
	if the graph $\mathcal{{G}}$ is connected. All eigenvalues of $L$ are
	non-negative. We define consensus error in a network by $\bar{e}_{x}=\Pi\bar{x}$
	where $\Pi=I_{N}-\frac{1}{N}\mathbf{1}_{N}\mathbf{1}_{N}^{\top}$,
	and $\bar{x}$ denotes the aggregate state of the network as $\bar{x}=\left[x_{1}\ldots x_{N}\right]^{\top}$.
	Note that $\mathbf{1}^{\top}\Pi=\mathbf{0}$ and $\Pi\mathbf{1=0}$.\textcolor{black}{{} }}

\subsection{Stability of Perturbed Systems}

\textcolor{black}{Consider the nominal system
	\begin{equation}
	\dot{x}=f(x,t).\label{eq:Perl2}
	\end{equation}
	where $f:\mathcal{{D}}\times[0,\infty)\rightarrow\mathbb{R}^{N}$
	is piecewise continuous in $t$ and locally Lipschitz in $x$ on $\mathcal{{D}}\times[0,\infty)$,
	and $\mathcal{{D}}\subset\mathbb{R}^{N}$ is a domain that contains
	the origin $x=0$. Suppose that the system \eqref{eq:Perl2} is perturbed
	by the term $d(x,t)$, where $d:\mathcal{D}\times[0,\infty)\rightarrow\mathbb{R}^{N}$
	denotes perturbation and is called unvanished perturbation if $d(0,t)\neq0$.
	Then, the perturbed system corresponding to \eqref{eq:Perl2} is given
	by
	\begin{equation}
	\dot{x}=f(x,t)+d(x,t).\label{eq:Prel1}
	\end{equation}
}
\begin{lemma}
	\label{lem:stability}\cite[Theorem 5.1]{khalil1996nonlinear} Let
	$V:\mathcal{D}\times[0,\infty)\rightarrow\mathbb{R}^{N}$ be a continuously
	differentiable function such that
	\begin{eqnarray*}
		W_{1}(x) & \leq & V(x,t)\leq W_{2}(x)\\
		\frac{\partial V(x,t)}{\partial t}+\frac{\partial V}{\partial x}f(x,t) & \leq & -W_{3}(x),\,\,\,\forall\left\Vert x\right\Vert \geq\mu>0,
	\end{eqnarray*}
	$\forall t\geq0$, $\forall x\in\mathcal{D}$, where $W_{1}(x)$,
	$W_{2}(x)$, and $W_{3}(x)$ are continuous positive definite functions
	on $\mathcal{D}$. Take $r>0$ such that $B_{r}\subset\mathcal{D}$.
	Suppose that $\mu$ is small enough such that
	\[
	\underset{\left\Vert x\right\Vert \leq\mu}{\text{{max}}}W_{2}(x)<\underset{\left\Vert x\right\Vert =r}{\text{{min}}}W_{1}(x)
	\]
	Consider $\eta=\text{{max}}_{\left\Vert x\right\Vert \leq\mu}W_{2}(x)$
	and take $\rho$ such that $\eta<\rho<\text{{min}}_{\left\Vert x\right\Vert =r}W_{1}(x)$.
	Then, there exists a finite time $t_{1}$ (dependent on $x(t_{0})$
	and $\mu$) such that $\forall\,x(t_{0})\in\{x\in B_{r}|W_{2}(x)\leq\rho\}$,
	the solutions of $\dot{x}=f(x,t)$ satisfy $x(t)\in\{x\in B_{r}|W_{1}(x)\leq\rho\},\forall t\geq t_{1}$.
	Moreover, if $\mathcal{D}=\mathbb{{R}}^{N}$ and $W_{1}(x)$ is radially
	unboudned, then this result holds for any initial state and any $\mu$.
\end{lemma}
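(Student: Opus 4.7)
The plan is a standard Lyapunov sublevel-set argument: use the sandwich $W_1\leq V\leq W_2$ to produce nested, time-varying, forward-invariant regions, and exploit the strict negativity of $\dot V$ on $\{\|x\|\geq\mu\}$ to drive trajectories inward in finite time. I set $\Omega_c(t):=\{x\in\mathcal D : V(x,t)\leq c\}$. The hypothesis $\rho<\min_{\|x\|=r}W_1(x)$ together with $V\geq W_1$ forces $\Omega_\rho(t)\subset B_r$ for every $t$, because $V(x,t)\geq W_1(x)>\rho$ on the sphere $\|x\|=r$. Dually, $\eta=\max_{\|x\|\leq\mu}W_2(x)$ together with $V\leq W_2$ yields $\bar B_\mu\subseteq\Omega_\eta(t)$, producing the chain $\bar B_\mu\subseteq\Omega_\eta(t)\subseteq\Omega_\rho(t)\subset B_r$.

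Next I would establish forward invariance of $\Omega_\rho(t)$. For $x(t_0)\in\{x\in B_r:W_2(x)\leq\rho\}$, one has $V(x(t_0),t_0)\leq W_2(x(t_0))\leq\rho$. Suppose, for contradiction, that there is a first time $t^\star>t_0$ with $V(x(t^\star),t^\star)=\rho$ and $V$ attempting to cross upward. At $t^\star$ it must be the case that $\|x(t^\star)\|\geq\mu$, because otherwise $V(x(t^\star),t^\star)\leq W_2(x(t^\star))\leq\eta<\rho$. The hypothesis then gives $\dot V(x(t^\star),t^\star)\leq -W_3(x(t^\star))<0$, contradicting the upward crossing. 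Hence $V(x(t),t)\leq\rho$, and in particular $x(t)\in B_r$, for all $t\geq t_0$.

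Third, I would obtain finite-time entry into $\Omega_\eta(t)$. On the compact annulus $\{\mu\leq\|x\|\leq r\}$ the continuous positive definite function $W_3$ attains a minimum $\gamma:=\min_{\mu\leq\|x\|\leq r}W_3(x)>0$. While $V(x(t),t)>\eta$, the contrapositive of the previous paragraph forces $\|x(t)\|\geq\mu$, hence $\dot V\leq -\gamma$, and integration drives $V$ down to $\eta$ no later than $t_1:=t_0+(\rho-\eta)/\gamma$. Re-running the non-crossing argument at level $\eta$ keeps $V(x(t),t)\leq\eta$ for all $t\geq t_1$, so $W_1(x(t))\leq V(x(t),t)\leq\eta<\rho$ and $x(t)\in B_r$, which is precisely the claimed inclusion. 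The radially unbounded case is then cosmetic: if $\mathcal D=\mathbb R^N$ and $W_1$ is radially unbounded, $r$ can be enlarged so that $\{W_2\leq\rho\}\subset B_r$ accommodates any prescribed initial condition and any $\mu$.

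The main obstacle is the non-crossing argument under explicit time dependence of $V$: because $\partial V/\partial t$ enters $\dot V$, the autonomous level-set invariance principle cannot be invoked directly, and one must synthesize a first-exit-time argument with the two-sided bound $W_1\leq V\leq W_2$ in order to pin $x(t^\star)$ inside the annulus where $\dot V$ is genuinely known to be strictly negative. Everything else is routine compactness and integration.
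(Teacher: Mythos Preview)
The paper does not supply its own proof of this lemma: it is quoted verbatim as \cite[Theorem 5.1]{khalil1996nonlinear} and used as a black box in the subsequent arguments. Your proof is correct and is essentially the standard argument one finds in Khalil's textbook---nested sublevel sets, a first-exit-time contradiction for forward invariance of $\Omega_\rho$, and a uniform decay rate $\gamma=\min_{\mu\le\|x\|\le r}W_3(x)$ on the compact annulus to force finite-time entry into $\Omega_\eta$---so there is nothing to compare against on the paper's side.

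One small technical remark: when you take $\gamma=\min_{\mu\le\|x\|\le r}W_3(x)$, you implicitly need the closed annulus to lie in $\mathcal D$, whereas the hypothesis only gives $B_r\subset\mathcal D$ (open ball). This is harmless---either shrink $r$ slightly or note that the trajectory, being confined to $\Omega_\rho(t)\subset B_r$, actually stays in a compact subset of $B_r$---but it is worth a half-sentence if you want the argument airtight.
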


\subsection{Notations}

Throughout this paper, $\left\Vert \cdot\right\Vert _{1}$ and $\left\Vert \cdot\right\Vert $
denote 1-norm and 2-norm operators, respectively. $\text{\ensuremath{\mathbb{{R}}}}$
represents the real numbers set and $\mathbb{{R}}^{+}$ implies the
positive real numbers subset. $\mathbb{{R}}^{N}$ includes all vectors
with $N$ real elements. The term  $\mathbb{{R}}^{N\times N}$ represents the
set of all $N\times N$ matrices with real entries. Furthermore,  $[\mathcal{{M}}_{ij}]_{N\times N}$
represents an $N\times N$ matrix with entries $\mathcal{{M}}_{ij}$,
where the index $i$ stands for the $i$-th row and $j$ refers to
$j$-th column.

\section{Problem Statement and Main Results}\label{sec:main}

Consider the single-integrator dynamics
\begin{equation}
\ensuremath{\begin{array}{l}
	{\dot{x}}(t)={u}(t),\end{array}}\label{eq:1SingIntDym-1}
\end{equation}
where $u\in\mathbb{{R}}$ and $x\in\mathbb{{R}}$ denote the  state and
control input, respectively. Assume an objective function, say $Q(x,t):\mathbb{{R}\times\mathbb{{R}\rightarrow\mathbb{{R}}}}$,
that is twice continuously differentiable and strictly convex in $x$.
\begin{lemma}
	The following control input will make the dynamics \eqref{eq:1SingIntDym-1}
	converge to the minimizer of the time-varying convex objective function
	$Q(x,t)$.
\end{lemma}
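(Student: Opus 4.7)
The plan is to use a Lyapunov argument built around the stationarity condition of $Q(\cdot,t)$. Since $Q$ is twice continuously differentiable and strictly convex in $x$, for each fixed $t$ there is a unique minimizer $x^*(t)$ characterized by $\frac{\partial Q}{\partial x}(x^*(t),t)=0$, and strict convexity gives $\frac{\partial^2 Q}{\partial x^2}>0$, so the Hessian is invertible everywhere. The natural Lyapunov candidate is
\begin{equation*}
V(x,t)=\tfrac{1}{2}\left(\tfrac{\partial Q}{\partial x}(x,t)\right)^{2},
\end{equation*}
which is nonnegative, vanishes precisely at $x=x^{*}(t)$, and is radially unbounded in the coordinate $e:=\frac{\partial Q}{\partial x}$.

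Differentiating along trajectories of \eqref{eq:1SingIntDym-1} gives
\begin{equation*}
\dot{V}= \tfrac{\partial Q}{\partial x}\left(\tfrac{\partial^{2}Q}{\partial x^{2}}\,u+\tfrac{\partial^{2}Q}{\partial t\,\partial x}\right).
\end{equation*}
The control law that the lemma is announcing is almost certainly a Newton-type time-varying tracking law of the form
\begin{equation*}
u=-\left(\tfrac{\partial^{2}Q}{\partial x^{2}}\right)^{-1}\!\left(\alpha\tfrac{\partial Q}{\partial x}+\tfrac{\partial^{2}Q}{\partial t\,\partial x}\right),\qquad \alpha>0,
\end{equation*}
since this is precisely what cancels the drift term $\frac{\partial^{2}Q}{\partial t\,\partial x}$ induced by the time variation of the minimizer while injecting damping on the gradient. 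Substituting this $u$ into $\dot V$ collapses the expression to $\dot V=-\alpha\bigl(\tfrac{\partial Q}{\partial x}\bigr)^{2}=-2\alpha V$, so $V$ decays exponentially and $\frac{\partial Q}{\partial x}(x(t),t)\to 0$.

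The final step is to translate exponential decay of the gradient into convergence of $x(t)$ to $x^{*}(t)$. This follows from strict convexity: the map $x\mapsto \frac{\partial Q}{\partial x}(x,t)$ is strictly monotone, so bounding $\frac{\partial^{2}Q}{\partial x^{2}}\ge m>0$ on the sublevel set of interest yields $|x-x^{*}(t)|\le m^{-1}|\tfrac{\partial Q}{\partial x}|$, and hence $x(t)\to x^{*}(t)$ at the same exponential rate.

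The main obstacle in making the argument rigorous is handling the time-varying character of $Q$. One must ensure that $\frac{\partial^{2}Q}{\partial x^{2}}$ is bounded below by a positive constant uniformly in $t$ (or at least on the forward-invariant set generated by $V$), so that the Hessian inversion in $u$ is well defined and so that the gradient-to-state bound above is uniform. If that uniform convexity fails, one recovers only practical/semi-global convergence, and this is where Lemma~\ref{lem:stability} would enter: treat the residual $\frac{\partial^{2}Q}{\partial t\,\partial x}$-term as an (unvanishing) perturbation of the autonomous gradient flow and apply the ultimate-boundedness conclusion of that lemma to obtain convergence to a neighborhood of $x^{*}(t)$ whose radius shrinks with the size of the time-variation.
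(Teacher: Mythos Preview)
Your approach is essentially the paper's: the same Lyapunov candidate $V=\tfrac{1}{2}\bigl(\tfrac{\partial Q}{\partial x}\bigr)^{2}$, the same computation of $\dot V$, and the same Newton-type feedforward law (the paper takes your $\alpha=1$). The only substantive difference is in how convergence is extracted: you observe $\dot V=-2\alpha V$ and conclude exponential decay of the gradient directly, whereas the paper argues more circuitously that $\tfrac{\partial Q}{\partial x}\in\mathcal{L}^{\infty}\cap\mathcal{L}^{2}$ and then invokes Barbalat's Lemma to get asymptotic convergence. Your route is sharper and yields a rate; the paper's route is weaker but avoids any appeal to uniform strong convexity. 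Note also that the paper stops at the optimality condition $\tfrac{\partial Q}{\partial x}\to 0$ and does not carry out your final step of converting this to $x(t)\to x^{*}(t)$, nor does it address the uniform-in-$t$ Hessian bound you flag; your caveat there is well taken but goes beyond what the paper proves.
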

\begin{equation}
u(t)=-\left(\frac{\partial^{2}Q(x,t)}{\partial x^{2}}\right)^{-1}\left(\frac{\partial Q(x,t)}{\partial x}+\frac{\partial^{2}Q(x,t)}{\partial x\partial t}\right)\label{eq:1CentLaw-1}
\end{equation}

\begin{proof}
	Choose a Lyapunov function as $V(x,t)=\frac{1}{2}\left(\frac{\partial Q(x,t)}{\partial x}\right)^{2}$and
	take its time derivative along the {\crb trajectories of the } dynamics \eqref{eq:1SingIntDym-1}.
	Then, we have
	\[
	\dot{V}(x,t)=-\left(\frac{\partial Q(x,t)}{\partial x}\right)\left(\frac{\partial^{2}Q(x,t)}{\partial x^{2}}\dot{x}+\frac{\partial^{2}Q(x,t)}{\partial x\partial t}\right).
	\]
	By substituting $\dot{x}$ in the above relation from \eqref{eq:1CentLaw-1},
	the following is obtained
	\begin{eqnarray}
	\dot{V}(x,t) & = & -\left(\frac{\partial Q(x,t)}{\partial x}\right)^{2}\leq0\label{eq:Lemma1_1}
	\end{eqnarray}
	From the above inequality, it follows that ${\displaystyle \frac{\partial Q(x,t)}{\partial x}}$
	remains bounded in $\mathbb{{R}^{\text{n}}\bigcup}\{\infty\}$, i.e.
	it belongs to $\mathcal{{L}^{\infty}}$ space. With integrating from
	both sides of equality \eqref{eq:Lemma1_1}, in the view of passivity
	of $V(x,t)$, we have
	\begin{gather}
	\int_{0}^{R}\left(\frac{\partial Q(x,t)}{\partial x}\right)^{2}dt=-\int_{0}^{R}\dot{V}(x,t)dt\nonumber \\
	={\crr -V(x(R),R)+V(x(0),0)\leq V(x(0),0)}.\label{eq:2The3-1-1}
	\end{gather}
	So, ${\displaystyle \frac{\partial Q(x,t)}{\partial x}}\in\mathcal{{L}}^{2}$.
	Now, we invoke Barbalat's Lemma \cite{tao1997simple} and obtain
	that ${\displaystyle \frac{\partial Q(x,t)}{\partial x}}$ asymptotically
	converges to zero as $t\rightarrow\infty$. Thereby, the optimality
	condition is certified, i.e. ${\displaystyle \frac{\partial Q(x,t)}{\partial x}=0}$.
\end{proof}

Now, consider the following autonomous agents under {\crb the topology
	$\mathcal{{G}}$.} Each agent is described by the continuous-time
single-integrator dynamics:

\begin{equation}
\ensuremath{{\dot{x}_{i}}(t)={u_{i}}(t),}\quad{i}\in\mathcal{{N}}\label{eq:AgentsDyn}
\end{equation}
where $x_{i}(t)$ and $\,u_{i}(t)\in\mathbb{R}$ represent the position
and the control input to agent $i$, respectively. For the sake of
notational brevity, we will use $x_{i}$ and $u_{i}$ in the rest
of this paper. We suppose that agent $i,\,\forall{i}\in\mathcal{{N}}$,
can share its state's information with agents {\crr within its neighborhood}
set, i.e. $\mathcal{{N}}_{i}$, according to the communication graph
$\mathcal{{G}}.$

The agents are supposed to rendezvous at a point that shall minimize
the aggregate convex function $\sum_{i=1}^{N}f_{i}(x)$ with regards
to individual convex inequalities $g_{i}(x)\leq0$, $i=1,\ldots,N.$
This problem can be described by

\begin{equation}
\begin{split}&\underset{x}{min}\,F(x)=\sum_{i=1}^{N}f_{i}(x), i\in\mathcal{N}\\
&\text{subject to}\,g_{i}(x)\leq0,i\in\mathcal{N}
\end{split}
\label{eq:min Prob}
\end{equation}
in which $f_{i}\left(\cdot\right):\mathbb{R\rightarrow\mathbb{R}}$
is the local objective function associated with node $i$ and $g_{i}\left(\cdot\right):\mathbb{R\rightarrow\mathbb{R}}$
\textcolor{black}{represents a constraint on the optimal position
	imposed by $i$-th agent. }It is supposed that each agent only has
the information of its own local objective function and states of
those agents within the {\crr set of its neighbors.}

\textcolor{black}{}We express the above explained problem as the
following convex optimization problem,
\begin{equation}
\begin{split}&\underset{\underset{i=1,\ldots,N}{x_{i}}}{min}\sum_{i=1}^{N}f_{i}(x_{i}),\\
&\text{subject to}\,\begin{cases}
g_{i}(x_{i})\leq0,\,i\in\mathcal{N}\\
x_{i}=x_{j}, \,\forall i,j\in\mathcal{N}.
\end{cases}
\end{split}
\label{eq:Dis Min Prob}
\end{equation}
In the minimization problem \eqref{eq:Dis Min Prob}, the consensus
constraint, i.e. $x_{i}=x_{j},$ $i,j=1,\ldots,N$, is imposed to
guarantee that the same decision is made by all agents eventually.
In order to find the solution of the problem \eqref{eq:Dis Min Prob},
each agent seeks the minimum of its own objective function, $f_{i}(x_{i})$,
fulfilling its associated inequality constraint $g_{i}(x_{i})\leq0.$
Furthermore, all agents reach consensus on their final states by exchanging
states' information under the graph $\mathcal{{G}}$.

\textcolor{black}{The following assumptions are considered in relation
	to the optimization problem }\eqref{eq:Dis Min Prob}\textcolor{black}{.}
\begin{assumption}
\begin{enumerate} \label{assum:1}

	\item[a.]  The objective functions $f_{i}$, $i=1,\dots,N$ , are strictly
	convex  and twice  continuously differentiable on
	$\mathbb{R^{\text{}}}$. The constraint functions $g_{i}$, $i=1,\dots,N$,
	are convex and twice continuously differentiable on $\mathbb{R^{\text{}}}$.
	\item[b.] The team objective function $\sum_{i=1}^{N}f_{i}(x_{i})$ is radially
	unbounded.
\end{enumerate}
\end{assumption}

\begin{assumption}\label{assum:2}
\textit{(Slater's Condition)} There is some $x^{*}\in\mathbb{R}$
such that $g_{i}(x^{*})\leq0$.
\end{assumption}

\begin{assumption}\label{assum:3}
The graph $\mathcal{{G}}$ is undirected and
has a spanning tree.
\end{assumption}

Intuitively, the problem \eqref{eq:Dis Min Prob} consists of a constrained
convex optimization problem and a consensus problem. The convex constrained
optimization problem can be defined as

\begin{equation}
\begin{array}{c}
\underset{\underset{i=1,\dots,N}{x_{i}}}{min}\sum_{i=1}^{N}f_{i}(x_{i}),\,\\
\text{subject to}\,g_{i}(x_{i})\leq0,\,\forall i\in\mathcal{{N}}.
\end{array}\label{eq:Problem3}
\end{equation}
The consensus problem is
\begin{equation}
\underset{{\scriptstyle t\rightarrow\infty}}{lim}\,(x_{i}-x_{j})=0,\,\,\,\,i,j=1,\ldots,N.\label{eq:consensusProb}
\end{equation}

Based on interior-point method \cite{boyd2004convex}, the convex
optimization problem \eqref{eq:Problem3} can be reformulated as follows,

\begin{alignat}{1}
\begin{array}{c}
\underset{\underset{i=1,\cdots,N}{x_{i}}}{min}\sum_{i=1}^{N}f_{i}(x_{i})-{\displaystyle \frac{\alpha}{\tau}}ln\left(-g_{i}(x_{i})\right).\end{array}\label{eq:1BarrierProb1}
\end{alignat}
where $\tau\in\mathbb{{R}}^{+}$ and $\alpha>1$. The term $-ln\left(-g_{i}(x_{i})\right)$
is referred to as \textit{\textcolor{black}{logarithmic barrier }}\textcolor{black}{function}\textit{\textcolor{black}{.}}
Note that the domain of the logarithmic barrier is the set of strictly
feasible points, i.e. $x_{i}\in\left\{ z\in\mathbb{{R}}:\,g_{i}(z)<0\right\} $.
The logarithmic barrier is a convex function; hence, the new optimization
problem remains to be convex.

Consider the objective function given in \eqref{eq:1BarrierProb1}.
As $x_{i}$ approaches the line $g_{i}(x_{i})=0$, the logarithmic
barrier $-ln\left(-g_{i}(x_{i})\right)$ becomes extremely large.
Thus, it keeps the search dmain within the strictly feasible set.
Note that the initial estimate shall be feasible, i.e. $g_{i}\left(x_{i}(0)\right)<0$,
$i=1,\ldots,N$.
\begin{remark}
	\label{rem:barriermethod}Suppose that the solutions to the optimization
	problem \eqref{eq:Problem3} and \eqref{eq:1BarrierProb1} are $x^{*}$and
	$\widetilde{x}^{*}$, respectively. Then, it can be shown that $f_{i}(x^{*})-f_{i}(\widetilde{x}^{*})=\frac{\alpha}{\tau}$
	\cite{boyd2004convex,wang2011control}. This suggests a very straightforward
	method for obtaining the solution to \eqref{eq:Problem3} with an
	accuracy of $\varepsilon$ by choosing $\tau\geq\frac{\alpha}{\varepsilon}$
	and solving \eqref{eq:1BarrierProb1}. Consequently, as $\tau$ increases,
	the solution to the optimization problem \eqref{eq:1BarrierProb1}
	becomes closer to the solution of \eqref{eq:Problem3} , i.e. as $\tau\rightarrow\infty$,
	$f_{i}(x^{*})-f_{i}(\widetilde{x}^{*})\rightarrow0$ is concluded
	\cite[pp. 568-571]{boyd2004convex}.
\end{remark}

The optimality conditions (so-called centrality conditions) for the convex
optimization problem \eqref{eq:1BarrierProb1} are expressed as \cite{boyd2004convex}

\begin{equation}
\begin{array}{c}
{\displaystyle \sum_{i=1}^{N}\frac{\partial f_{i}(\tilde{x}_{i}^{*})}{\partial x_{i}}}-{\displaystyle \frac{\alpha}{\tau}\frac{{\scriptstyle {\displaystyle {\displaystyle \frac{\partial g_{i}(\tilde{x}_{i}^{*})}{\partial x_{i}}}}}}{g_{i}(\tilde{x}_{i}^{*})}}=0,\\
g_{i}(\tilde{x}_{i}^{*})\leq0.
\end{array}\label{eq:1OptCondition}
\end{equation}
\textcolor{black}{ We now redefine the problem \eqref{eq:1BarrierProb1}
	as
	\begin{equation}
	\underset{\underset{i=1,\cdots,N}{x_{i}}}{min}\sum_{i=1}^{N}f_{i}(x_{i})-{\displaystyle \frac{\alpha}{t+1}}ln\left(-g_{i}(x_{i})\right)\label{eq:1BarrierProb2}
	\end{equation}
	that yields the solution of \eqref{eq:1BarrierProb1} asymptotically.}

\subsection{Centralized Algorithm}

\textcolor{black}{In this subsection, we propose a central paradigm
	to find the solution of the problem }{\eqref{eq:1BarrierProb2}.
}\textcolor{black}{Later, in the next subsection, we realize this
	centralized protocol via a distributed algorithm. }

\textcolor{black}{We utilize the strategy stated in } \eqref{eq:1CentLaw-1} and propose the
following centralized control law to find {\crr the optimal solution for  the optimization problem}
\textcolor{black}{\eqref{eq:1BarrierProb2}}\textcolor{black}{,}

\begin{equation}
\begin{split}
u_{i}(t)&=-\left(\sum_{i=1}^{N}\frac{\partial^{2}L_{i}(x_i,t)}{\partial x_{i}^{2}}\right)^{-1}\left(\sum_{i=1}^{N}\frac{\partial L_{i}(x_i,t)}{\partial x_{i}}\right.\\&\left.+\sum_{i=1}^{N}\frac{\partial^{2}L_{i}(x_i,t)}{\partial x_{i}\partial t}\right)+r_{i}\label{eq:1CentLaw_Collective-1}
\end{split}
\end{equation}
where
\begin{equation}
L_{i}(x_{i},t)=f_{i}(x_{i})-\frac{\alpha}{t+1}ln\left(-g_{i}(x_{i})\right),\label{eq:L}
\end{equation}
and
\begin{equation}
r_{i}=-\beta_{1}\sum_{j\in\mathcal{{N}}_{i}}tanh\beta_{2}(x_{i}-x_{j}),
\end{equation}
in which $t$ represents time and $\beta_{1},\beta_{2}\in\mathbb{R}^{+}$.

Note that the control command \eqref{eq:1CentLaw_Collective-1} consists
of two parts: the first term is to minimize the local objective function,
and the second part is a saturation term associated with the consensus
error.

\begin{definition}
	A network of agents with single-integrator dynamics as \eqref{eq:AgentsDyn}
	is said to reach a \textit{\textcolor{black}{practical consensus
			if $\left|x_{i}(t)-x_{j}(t)\right|\leq\delta_{0}$ }}\textcolor{black}{$\forall i,j\in\mathcal{{N}}$
		for an arbitrarily small $\delta_{0}$.}\end{definition}
	
In the sequel, we will show through the following lemma that the positions
of agents, i.e. $x_{i}$, $i=1,\ldots,N$ , reach the practical  consensus
under the control law \eqref{eq:1CentLaw_Collective-1}.

\begin{lemma}
	\label{prop:consensustheorem}Consider Assumptions \ref{assum:1}.a and \ref{assum:3}. \textcolor{black}{If
		$\left|\omega_{i}-\omega_{j}\right|<\omega_{0}$, $i,j=1,\ldots,N$,
		where $\omega_{i}=\left(\sum_{i=1}^{N}\frac{\partial^{2}L_{i}}{\partial x_{i}^{2}}\right)^{-1}\left(\sum_{i=1}^{N}\frac{\partial L_{i}}{\partial x_{i}}+\sum_{i=1}^{N}\frac{\partial^{2}L_{i}}{\partial x_{i}\partial t}\right)$,
		and $\beta_{1}\sqrt{\lambda_{2}(L)}>\omega_{0}$, then, }there exist
	$t_{1}$ and $\delta_{0}>0$ such that the positions of all  the agents
	with dynamics \eqref{eq:AgentsDyn}\textcolor{red}{{} }\textcolor{black}{under
		the control law \eqref{eq:1CentLaw_Collective-1} satisfy practical
		consesnsus, i.e. $\left|x_{i}(t)-x_{j}(t)\right|\leq\delta_{0}$, $i,j=1,\ldots,N$,
		for $t>t_{1}$.}\end{lemma}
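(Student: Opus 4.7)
The plan is to rewrite the closed-loop in collective form, project onto the consensus-error subspace, and apply the perturbed-system stability result (Lemma \ref{lem:stability}) with the optimization term $\omega_i$ treated as a bounded disturbance on a saturated consensus dynamics. Let $\bar x=[x_1,\ldots,x_N]^{\top}$ and $\bar\omega=[\omega_1,\ldots,\omega_N]^{\top}$. Using the incidence matrix $D$, we have $r_i=-\beta_1\bigl(D\tanh(\beta_2 D^{\top}\bar x)\bigr)_i$, so the collective dynamics read $\dot{\bar x}=-\bar\omega-\beta_1 D\tanh(\beta_2 D^{\top}\bar x)$. Since $\mathbf{1}^{\top}D=0$, multiplying by $\Pi$ and using $D^{\top}\bar x=D^{\top}\bar e_x$ together with $\Pi D=D$ yields
\begin{equation*}
\dot{\bar e}_x=-\Pi\bar\omega-\beta_1 D\tanh(\beta_2 D^{\top}\bar e_x).
\end{equation*}
The hypothesis $|\omega_i-\omega_j|<\omega_0$ forces $\|\Pi\bar\omega\|$ to be controlled by a constant multiple of $\omega_0$, so the first summand plays the role of the unvanishing perturbation $d(x,t)$ in Lemma \ref{lem:stability}.

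Next, I would take the Lyapunov candidate $V(\bar e_x)=\tfrac12\|\bar e_x\|^{2}$, which is positive definite on the range of $\Pi$. Differentiating along the consensus-error dynamics gives
\begin{equation*}
\dot V=-\bar e_x^{\top}\Pi\bar\omega-\beta_1(D^{\top}\bar e_x)^{\top}\tanh(\beta_2 D^{\top}\bar e_x).
\end{equation*}
The perturbation term is bounded by $\|\bar e_x\|\,\|\Pi\bar\omega\|$ via Cauchy--Schwarz. For the dissipative term, the elementary pointwise inequality $z\tanh(\beta_2 z)\geq |z|-1/(e\beta_2)$ (obtained by maximizing $z\,e^{-2\beta_2 z}$) summed over all edges yields $(D^{\top}\bar e_x)^{\top}\tanh(\beta_2 D^{\top}\bar e_x)\geq \|D^{\top}\bar e_x\|_1 - |\mathcal E|/(e\beta_2)$. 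Combining $\|D^{\top}\bar e_x\|_1\geq\|D^{\top}\bar e_x\|_2$ with the Fiedler-type bound $\|D^{\top}\bar e_x\|_2^{2}=\bar e_x^{\top}L\bar e_x\geq\lambda_2(L)\|\bar e_x\|^{2}$ (valid since $\bar e_x\perp\mathbf{1}$ and $\mathcal G$ is connected by Assumption \ref{assum:3}), we obtain
\begin{equation*}
\dot V\leq -\bigl(\beta_1\sqrt{\lambda_2(L)}-\|\Pi\bar\omega\|\bigr)\|\bar e_x\|+\frac{\beta_1|\mathcal E|}{e\beta_2}.
\end{equation*}
Under $\beta_1\sqrt{\lambda_2(L)}>\omega_0$ the bracketed coefficient is positive, so $\dot V\leq -W_3(\bar e_x)$ outside a ball of radius $\mu=\beta_1|\mathcal E|/\bigl(e\beta_2(\beta_1\sqrt{\lambda_2(L)}-\|\Pi\bar\omega\|)\bigr)$. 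Lemma \ref{lem:stability} then produces a finite $t_1$ after which $\|\bar e_x(t)\|\leq\mu$, and because $|x_i-x_j|\leq\sqrt2\,\|\bar e_x\|$, practical consensus follows with $\delta_0=\sqrt2\mu$, which can be made arbitrarily small by enlarging $\beta_2$.

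The main obstacle is giving a \emph{quantitative} lower bound on the saturation term $z\tanh(\beta_2 z)$: an asymptotic $\tanh\to\mathrm{sign}$ argument is too crude because it fails in a neighbourhood of the origin, which is exactly where the perturbation $\Pi\bar\omega$ most threatens to destabilize the consensus dynamics. The uniform estimate $z\tanh(\beta_2 z)\geq|z|-1/(e\beta_2)$ neatly resolves this by spreading the error over all edges as an $O(|\mathcal E|/\beta_2)$ offset that merely inflates the ultimate bound $\mu$. A secondary, largely cosmetic, difficulty is reconciling the scalar hypothesis $\beta_1\sqrt{\lambda_2(L)}>\omega_0$ with the fact that $\|\Pi\bar\omega\|$ nominally scales with $\sqrt N$; the constant $\omega_0$ in the statement should be read as absorbing that dimensional factor into the bound on $\|\Pi\bar\omega\|$.
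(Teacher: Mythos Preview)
Your proposal is correct and follows essentially the same route as the paper: write the closed loop in stacked form, project onto the consensus-error subspace via $\Pi$, take $V=\tfrac12\|\bar e_x\|^2$, bound the $\tanh$ dissipation from below by $\|D^{\top}\bar e_x\|_1$ minus an $O(1/\beta_2)$ offset, pass to the $2$-norm, invoke the Fiedler bound $\bar e_x^{\top}L\bar e_x\geq\lambda_2(L)\|\bar e_x\|^2$, and finish with Lemma~\ref{lem:stability}. The only cosmetic differences are that the paper uses the Polycarpou--Ioannou constant $0.2785$ in the $\tanh$ estimate where you use $1/e$, and the paper writes the offset with a factor $N$ where your $|\mathcal E|$ is in fact the correct count; your closing remark about the $\sqrt N$ discrepancy between $\omega_0$ and $\|\Pi\bar\omega\|$ is also a point the paper silently absorbs.
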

\begin{proof}
	The aggregate dynamics of the agents in \eqref{eq:AgentsDyn} under the control
	law \eqref{eq:1CentLaw_Collective-1} can be written as
	\begin{equation}
	\dot{\bar{x}}=-\beta_{1}Dtanh\left(\beta_{2}D^{\top}\bar{x}\right)+\Omega,\label{eq:ConsProp1}
	\end{equation}
	where $\varOmega=\left[\omega_{1}\ldots\omega_{N}\right]^{\top}$.
	Let the network's consensus error be defined as $\bar{e}_{x}=\Pi\bar{x}$.
	Hence,
	
	\begin{equation}
	\dot{\bar{e}}_{x}=-\beta_{1}Dtanh\left(\beta_{2}D^{\top}\bar{e}_{x}\right)+\Pi\Omega.\label{eq:ConsProp2}
	\end{equation}
	Choose the Lyapanov candidate function
	\begin{equation}
	V(\bar{e}_{x})=\frac{1}{2}\bar{e}_{x}^{\top}\bar{e}_{x}.\label{eq:ConsProp3}
	\end{equation}
	By taking time derivative from $V(\bar{e}_{x})$ along the trajectories
	of $\bar{e}_{x}$, it can be obtained that
	
	\begin{equation}
	\dot{V}(\bar{e}_{x})=-\beta_{1}\bar{e}_{x}^{\top}D\,tanh\left(\beta_{2}D^{\top}\bar{e}_{x}\right)+\bar{e}_{x}^{\top}\Pi\Omega.
	\end{equation}
	Define $\bar{y}=D^{\top}\bar{e}_{x}$, $\bar{y}=[y_{1}\ldots y_{N}]^{\top}$
	. Then, one can say that $-\bar{y}^{\top}tanh(\beta_{2}\bar{y})=\sum_{i}y_{i}tanh(\beta_{2}y_{i})$.
	From the inequality $-\eta tanh(\frac{\eta}{\epsilon})+\left|\eta\right|<0.2785\epsilon$
	for some $\epsilon,\eta\in\mathbb{{R}}$ \cite{polycarpou1993robust},
	it is straightforward to establish  that $-\bar{e}_{x}^{\top}D\,tanh\left(\beta_{2}D^{\top}\bar{e}_{x}\right)<-\left\Vert D^{\top}\bar{e}_{x}\right\Vert _{1}+\frac{N}{\beta_{2}}0.2785$.
	Thus, the following inequalities hold
	
	\begin{eqnarray*}
	\dot{V}(\bar{e}_{x}) & \leq & -\beta_{1}\left\Vert D^{\top}\bar{e}_{x}\right\Vert _{1}+{\scriptstyle \frac{\beta_{1}N}{\beta_{2}}}0.2785+\left\Vert \bar{e}_{x}\right\Vert \left\Vert \Pi\Omega\right\Vert ,\\
	& \leq & -\beta_{1}\left\Vert D^{\top}\bar{e}_{x}\right\Vert +{\scriptstyle \frac{\beta_{1}N}{\beta_{2}}}0.2785+\left\Vert \bar{e}_{x}\right\Vert \left\Vert \Pi\Omega\right\Vert .
	\end{eqnarray*}
	The second inequality arises from the fact that $\left\Vert p\right\Vert \leq\left\Vert p\right\Vert _{1}$
which  holds for any $p\in\mathbb{R}^{n}$. Then, from the assumption
	$\left\Vert \omega_{i}-\omega_{j}\right\Vert <\omega_{0},\,\forall i,j \in \mathcal N $,
	one can attain
	
	\begin{equation}
	\dot{V}(\bar{e}_{x})\leq-\beta_{1}\sqrt{\bar{e}_{x}^{\top}DD^{\top}\bar{e}_{x}}+{\scriptstyle \frac{\beta_{1}N}{\beta_{2}}}0.2785+\left\Vert \bar{e}_{x}\right\Vert \omega_{0}.
	\end{equation}
	According to \textit{Courant-Fischer Formula} \cite{horn2012matrix},
	one can observe that $\bar{e}_{x}^{\top}DD^{\top}\bar{e}_{x}\geq\lambda_{2}(L)\left\Vert \bar{e}_{x}\right\Vert ^{2}$
	, so,
	
	\[
	\dot{V}(\bar{e}_{x})\leq-\beta_{1}\sqrt{\lambda_{2}(L)}\left\Vert \bar{e}_{x}\right\Vert +{\scriptstyle \frac{\beta_{1}N}{\beta_{2}}}0.2785+\left\Vert \bar{e}_{x}\right\Vert \omega_{0}.
	\]
	From the statement of Lemma, we have $\beta_{1}\sqrt{\lambda_{2}(L)}>\omega_{0}$.
	Furthermore, for $\left\Vert \bar{e}_{x}\right\Vert >\frac{\frac{\beta_{1}N}{\beta_{2}}0.2785}{\beta_{1}\sqrt{\lambda_{2}(L)}-\omega_{0}}$,
	we obtain $\dot{V}(\bar{e}_{x})\leq0$. Now, we are ready to invoke
	Lemma \ref{lem:stability} that guarantees that by choosing $\beta_{2}$
	large enough, one can make the consensus error $\delta_{0}$ as small
	as desired.
	
\end{proof}

\begin{remark}
	Assumption $\left|\omega_{i}-\omega_{j}\right|<\omega_{0}$
		in Lemma \ref{prop:consensustheorem} may seem unreasonable
		since, under some mild conditions, it implies boundedness of agents'
		positions, $x_{i}$, $i=1,\ldots,N$. By the following lemma, we will
		prove that the agents' positions stay bounded.\end{remark}
\begin{lemma}
	\label{lem:Boundedness}Consider the dynamics \eqref{eq:AgentsDyn}
	driven by the control command \eqref{eq:1CentLaw_Collective-1}. Then,
	under Assumptions \ref{assum:1}.a and \ref{assum:3}, the solutions of \eqref{eq:AgentsDyn}
	are globally bounded.\end{lemma}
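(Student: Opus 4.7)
My plan is to build a Lyapunov argument for the centroid, run it in parallel with the consensus analysis from Lemma~\ref{prop:consensustheorem}, and then combine the two to conclude boundedness of each $x_{i}$.

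A preliminary but critical observation is that the first factor on the right-hand side of the control law \eqref{eq:1CentLaw_Collective-1} does not actually depend on the index $i$: the sums in the definition of $\omega_{i}$ range over all agents, so all the $\omega_{i}$'s coincide. Writing $\omega$ for this common value, the aggregate closed-loop dynamics become $\dot{\bar{x}} = -\omega\mathbf{1} - \beta_{1}D\tanh(\beta_{2}D^{\top}\bar{x})$. Projecting onto the consensus subspace via $\Pi$, and using $\Pi\mathbf{1} = \mathbf{0}$ together with $\mathbf{1}^{\top}D = \mathbf{0}$, gives the \emph{unperturbed} error dynamics $\dot{\bar{e}}_{x} = -\beta_{1}D\tanh(\beta_{2}D^{\top}\bar{e}_{x})$. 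Repeating the calculation from the proof of Lemma~\ref{prop:consensustheorem} with $V_{1} = \tfrac{1}{2}\bar{e}_{x}^{\top}\bar{e}_{x}$ and zero perturbation shows that $\|\bar{e}_{x}(t)\|$ is uniformly bounded, hence $|x_{i}(t) - x_{j}(t)|$ is globally bounded.

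For the centroid $\bar{x}_{\mathrm{avg}}(t) = \tfrac{1}{N}\mathbf{1}^{\top}\bar{x}(t)$, the consensus term averages to zero because $\mathbf{1}^{\top}D = \mathbf{0}$, leaving $\dot{\bar{x}}_{\mathrm{avg}} = -\omega$. I would then introduce the Lyapunov candidate
\begin{equation*}
V_{2} = \tfrac{1}{2}S^{2}, \qquad S := \sum_{i=1}^{N}\frac{\partial L_{i}(x_{i},t)}{\partial x_{i}}.
\end{equation*}
Differentiating $V_{2}$ along trajectories and using the very definition of $\omega$ to cancel the dominant contribution produces
\begin{equation*}
\dot{V}_{2} = -S^{2} + SR, \qquad R := \sum_{i=1}^{N}\frac{\partial^{2}L_{i}}{\partial x_{i}^{2}}\,r_{i}.
\end{equation*}
Young's inequality then gives $\dot{V}_{2} \leq -V_{2} + \tfrac{1}{2}R^{2}$, and Lemma~\ref{lem:stability} yields ultimate boundedness of $S$ as long as $R$ is uniformly bounded.

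The hard part will be exactly this bound on $R$. Although $|r_{i}| \leq \beta_{1}|\mathcal{N}_{i}|$ thanks to the $\tanh$ saturation, the second derivatives $\partial^{2}L_{i}/\partial x_{i}^{2}$ can blow up near the barrier boundary $g_{i} = 0$ and may also grow at infinity depending on the curvature of $f_{i}$. I would close this loop using the consensus-error bound already derived: since $|x_{i} - \bar{x}_{\mathrm{avg}}|$ is uniformly bounded, the strict convexity and radial unboundedness of $\sum_{i}f_{i}$ (Assumption~\ref{assum:1}.b) would force $|S|$ to grow without bound if $|\bar{x}_{\mathrm{avg}}|$ escaped to infinity, contradicting the estimate on $V_{2}$. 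A symmetric argument based on the barrier property of $L_{i}$, together with the strictly-feasible initialization $g_{i}(x_{i}(0)) < 0$, keeps each $x_{i}$ away from the boundary $g_{i} = 0$. Combining the bounded centroid with the bounded consensus error yields the global boundedness of each $x_{i}(t)$.
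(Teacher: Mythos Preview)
Your opening observation that all the $\omega_i$ coincide (the sums range over every agent) is correct and sharper than what the paper uses; it immediately kills the perturbation in the consensus--error dynamics and gives a clean uniform bound on $\bar{e}_x$. The trouble is exactly where you flag it, and the resolution you sketch is circular. The inequality $\dot{V}_2 \leq -V_2 + \tfrac{1}{2}R^2$ bounds $S$ only if $R$ stays bounded; but $R = \sum_i (\partial^2 L_i/\partial x_i^2)\,r_i$ carries the Hessians $\partial^2 L_i/\partial x_i^2$, and under Assumption~\ref{assum:1}.a alone there is no a~priori upper bound on $f_i''$ as $|x_i| \to \infty$, nor on the barrier Hessian as $g_i(x_i) \to 0^-$. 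Your plan is to exclude $|\bar{x}_{\mathrm{avg}}| \to \infty$ by arguing that $|S|$ would then diverge and contradict the $V_2$ estimate --- yet that very estimate presupposes $R$ bounded, i.e., presupposes $x_i$ bounded and bounded away from the barrier. The loop does not close. Two smaller points compound this: the lemma only assumes~\ref{assum:1}.a and~\ref{assum:3}, so radial unboundedness (Assumption~\ref{assum:1}.b) is not at your disposal; and even granting it, strict convexity plus radial unboundedness of a scalar function does not force its derivative to be coercive (take any $f$ with $f' = \tanh$).

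The paper sidesteps all second--derivative bookkeeping by working instead with $W(\bar{x}) = \tfrac{1}{2}\|\bar{x} - \bar{x}^*\|^2$, where $\bar{x}^*$ minimizes $\sum_i L_i$, and then invoking the first--order convexity inequality $-(x_i - x_i^*)\,\partial \hat{L}_i/\partial x_i \leq \hat{L}_i(x_i^*) - \hat{L}_i(x_i) \leq 0$ (for a rescaled barrier $\hat{L}_i$) to make the optimization contribution to $\dot{W}$ non-positive outright. Only the $\tanh$ consensus term then has to be estimated, and that step involves no Hessians of $L_i$, so the circularity never appears.
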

\begin{proof}
	We study boundedness of the solutions of dynamics \eqref{eq:AgentsDyn}
	under the control law \eqref{eq:1CentLaw_Collective-1} via the Lyapunov
	stability analysis. Let us consider the following quadratic Lyapunov function
	\begin{equation}
	W(\bar{x})=\frac{1}{2}(\bar{x}-\bar{x}^{*})^{\top}(\bar{x}-\bar{x}^{*}),\label{eq:LyapFunBounded}
	\end{equation}
	where $\bar{x}^{*}\in\mathbb{{R}}^{n}$ is the optimum point for the
	convex function $\sum_{i=1}^{N}L_{i}(x_{i},t)$. Let us take derivative
	from both sides of \eqref{eq:LyapFunBounded} along the trajectories
	\eqref{eq:AgentsDyn} under {\crb the control law} \eqref{eq:1CentLaw_Collective-1} with
	respect to time. Then, we obtain
	\begin{align}
	\dot{W}(\bar{x}) & =(\bar{x}-\bar{x}^{*})^{\top}\dot{\bar{x}}\nonumber \\
	& =-\sum_{i=1}^{N}\left(x_{i}-x_{i}^{*}\right)\left(\sum_{i=1}^{N}\frac{\partial L_{i}}{\partial x_{i}}+\sum_{i=1}^{N}\frac{\partial^{2}L_{i}}{\partial x_{i}\partial t}\right)\nonumber \\
	& \left(\sum_{i=1}^{N}\frac{\partial^{2}L_{i}}{\partial x_{i}^{2}}\right)^{-1}-\beta_{1}(\bar{x}-\bar{x}^{*})^{\top}Dtanh\beta_{2}D^{\top}\bar{x}\nonumber \\
	& =-\sum_{i=1}^{N}\left(x_{i}-x_{i}^{*}\right)\left({\displaystyle \sum_{i=1}^{N}\frac{\partial f_{i}(x_{i})}{\partial x_{i}}}-{\displaystyle \frac{\alpha t}{\left(t+1\right)^{2}}\sum_{i=1}^{N}\frac{{\scriptstyle {\displaystyle {\displaystyle {\textstyle \frac{\partial g_{i}(x_{i})}{\partial x_{i}}}}}}}{{\textstyle {\textstyle g_{i}(x_{i})}}}}\right)\nonumber \\
	& \left(\sum_{i=1}^{N}\frac{\partial^{2}L_{i}(x_{i},t)}{\partial x_{i}^{2}}\right)^{-1}-\beta_{1}(\bar{x}-\bar{x}^{*})^{\top}Dtanh\beta_{2}D^{\top}\bar{x}.\label{eq:1BoundedLemma2}
	\end{align}
	\textcolor{black}{Define $\hat{L}_{i}(x_{i},t)=f_{i}(x_{i})-\frac{\alpha t}{\left(t+1\right)^{2}}ln\left(-g_{i}(x_{i})\right)$
		. Note that $\hat{L}_{i}(x_{i},t)$ is strictly convex as $\alpha>1$.
		Let the minimizer of $\hat{L}_{i}(x_{i},t)$ be $\hat{x}_{i}^{*}$.
	}\textcolor{black}{One can observe that }\textcolor{black}{{} the minimizers of $\hat{L}_{i}(x_{i},t)$ and $L_{i}(x_{i},t)$,
		$i=1,\ldots,N$, are identical, i.e. $\hat{x}_{i}^{*}=x_{i}^{*}$.
	}\textcolor{black}{On the other hand,}\textcolor{black}{{} due to convexity
		of $\hat{L}_{i}(x_{i},t)$ in $x_{i}$, it holds that $-\left(x_{i}-x_{i}^{*}\right)\frac{\partial\hat{L}_{i}(x_{i},t)}{\partial x_{i}}<\hat{L}_{i}(x_{i}^{*},t)-\hat{L}_{i}(x_{i},t)$,
		$i=1,\ldots,N$. As the inequality $\hat{L}_{i}(x_{i}^{*},t)\leq\hat{L}_{i}(x_{i},t)$
		holds for any $x_{i}$, from the definition of convexity, it can be
		inferred that the first term on the right side of the equality \eqref{eq:1BoundedLemma2}
		is non-positive. }\textcolor{black}{Thus, one obtains}
	
	\begin{eqnarray*}
	\dot{W}(\bar{x}) & \leq & -\beta_{1}(\bar{x}-\bar{x}^{*})^{\top}Dtanh\beta_{2}D^{\top}\bar{x}\label{eq:1BoundedLemma}\\
	& = & -\beta_{1}\bar{x}^{\top}Dtanh\beta_{2}D^{\top}\bar{x}+\beta_{1}\bar{x}^{*\top}Dtanh\beta_{2}D^{\top}\bar{x}\\
	& \leq & -\beta_{1}\left\Vert D^{\top}\bar{x}\right\Vert _{1}+\frac{0.2785\beta_{1}}{\beta_{2}}+\beta_{1}\left\Vert D^{\top}\bar{x}^{*}\right\Vert
	\end{eqnarray*}
	The last {\crb inequality} arises from the inequalities $-\eta tanh(\frac{\eta}{\epsilon})+\left|\eta\right|<0.2785\epsilon$
	\cite{polycarpou1993robust}, with $\epsilon,\eta\in\mathbb{{R}}$,
	and $\left\Vert tanh(\cdot)\right\Vert \leq1$. Furthermore, one can
	easily find $m\in\mathbb{R}$ such that $\left\Vert D^{\top}\bar{x}^{*}\right\Vert \leq m$. This discussion leads to
	\begin{eqnarray*}
	\dot{W}(\bar{x}) & \leq & -\beta_{1}\left\Vert D^{\top}\bar{x}\right\Vert +\frac{0.2785\beta_{1}}{\beta_{2}}+\beta_{1}m\label{eq:boundedEnd}\\
	& = & -\beta_{1}\sqrt{\bar{x}DD^{\top}\bar{x}}+\frac{0.2785\beta_{1}}{\beta_{2}}+\beta_{1}m\\
	& \leq & -\theta\left\Vert \bar{x}\right\Vert +\left(\theta-\beta_{1}\sqrt{\lambda_{2}(DD^{\top})}\right)\left\Vert \bar{x}\right\Vert \\
	&  & +\frac{0.2785\beta_{1}}{\beta_{2}}+\beta_{1}m,\,0<\theta<1\\
	& \leq & -\theta\left\Vert \bar{x}\right\Vert ,\,\,\,\forall\bar{x}\in\mathcal{{B}}.
	\end{eqnarray*}
	where $\mathcal{{B}}=\left\{ \bar{x}\in\mathbb{{R}}^{N}|\left\Vert \bar{x}\right\Vert \geq\frac{\frac{0.2785\beta_{1}}{\beta_{2}}+m\beta_{1}}{\theta-\beta_{1}\sqrt{\lambda_{2}(L)}}\right\} $.
	Now, by Lemma \ref{lem:stability}, it will be certified that $\bar{x}$
	remains bounded.
\end{proof}

\subsection{Distributed Algorithm}

It is obvious that the control law \eqref{eq:1CentLaw_Collective-1}
is not locally implementable since it requires the knowledge of the
whole network as aggregate objective function $\sum_{i=1}^{N}f_{i}(x)$
as well as all inequality constraints $g_{i}(x)\leq0$,\textcolor{black}{{}
	$i=1,\ldots,N$.}  Through the following algorithm, we estimate
\eqref{eq:1CentLaw_Collective-1} in a distributed manner and adopt
it to solve the distributed optimization problem \eqref{eq:Problem3}.

As it follows, each agent generates an internal dynamics to obtain
the estimates of collective objective function's gradients and other
terms which are required for computation of \eqref{eq:1CentLaw_Collective-1}
via only local information in a cooperative fashion. {\crb Consider}  the following
estimator dynamics,

\begin{align}
\dot{\kappa}_{i}(t) & =-c\sum_{j\in\mathcal{{N}}_{i}}sgn\left(\nu{}_{i}(t)-\nu_{j}(t)\right),\label{eq:1EstDynamics1}
\end{align}
where
\begin{equation}
\nu_{i}(t)=\kappa_{i}(t)+\left[{\displaystyle \begin{array}{c}
	\frac{\partial L_{i}(x_{i},t)}{\partial x_{i}}\\
	\frac{\partial^{2}L_{i}(x_{i},t)}{\partial x_{i}\partial t}\\
	\frac{\partial^{2}L_{i}(x_{i},t)}{\partial x_{i}^{2}}
	\end{array}}\right].\label{eq:1IntSignal1}
\end{equation}

From \eqref{eq:1EstDynamics1}, one obtains $\sum_{i=1}^{N}\dot{\kappa}_{i}(t)=0$.
Assume that $\kappa_{i}$, $i=1,\ldots,N$, are initialized such that
$\sum_{i=1}^{N}\kappa(0)=0$. Then, $\sum_{i=1}^{N}\kappa_{i}(t)=0$
is concluded for all $t>0$. Hence, $\sum_{i=1}^{N}\nu_{i}(t)=\sum_{i=1}^{N}\left[{\displaystyle \begin{array}{c}
	\frac{\partial L_{i}(x_{i},t)}{\partial x_{i}}\\
	\frac{\partial^{2}L_{i}(x_{i},t)}{\partial x_{i}\partial t}\\
	\frac{\partial^{2}L_{i}(x_{i},t)}{\partial x_{i}^{2}}
	\end{array}}\right]$. It follows from Theorem 1 in \cite{chen2012distributed} that if
$c>\underset{t}{sup}\left\{ \left\Vert \kappa{}_{i}(x_{i},t)\right\Vert _{\infty}\right\} $,
$\forall i\in\mathcal{{N}}$, then consensus on $\text{\ensuremath{\nu}}_{i}$,
$i=1,\ldots,N$, i.e. {\crb $\left|\nu_{i}(t)-\nu_{j}(t)\right|=0\,\forall i,j \in \mathcal N$,}
is achieved {\crb  over a finite time} {\crb say $T$}.
With $\nu_{i}(t)=\nu_{j}(t)$, the following holds,
\begin{equation}
\nu_{i}(t)=\frac{1}{N}\sum_{i=1}^{N}\left[{\displaystyle \begin{array}{c}
	\nu_{i1}\\
	\nu_{i2}\\
	\nu_{i3}
	\end{array}}\right].\label{eq:1EstDyn4}
\end{equation}
\textcolor{black}{where $\nu_{i1}=\frac{\partial L_{i}(x_{i},t)}{\partial x_{i}},$
	$\nu_{i2}=\frac{\partial^{2}L_{i}(x_{i},t)}{\partial x_{i}\partial t}$,
	and $\nu_{i3}=\frac{\partial^{2}L_{i}(x_{i},t)}{\partial x_{i}^{2}}$.
}
\begin{theorem}
	Suppose that Assumptions \ref{assum:1}, \ref{assum:2}, and \ref{assum:3} hold. Moreover, $\sum_{i=1}^{N}\kappa_{i}(0)=0$
	and $c>\underset{t}{sup}\left\{ \left\Vert \kappa{}_{i}(x_{i},t)\right\Vert _{\infty}\right\} ,\,\forall i\in\mathcal{{N}}$.
	Then, the protocol
	\begin{equation}
	u_{i}(t)=-\nu_{i3}^{-1}\left(\nu_{i1}+\nu_{i2}\right)+r_{i},\,i=1,\ldots,N\label{eq:1DisEstLaw}
	\end{equation}
	will drive the agents \eqref{eq:AgentsDyn} to the solution of the
	distributed convex optimization problem \eqref{eq:Problem3}.\end{theorem}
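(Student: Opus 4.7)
The plan is to combine three ingredients: (i) finite-time convergence of the estimator dynamics \eqref{eq:1EstDynamics1}, (ii) the boundedness and practical-consensus results of Lemmas \ref{prop:consensustheorem} and \ref{lem:Boundedness}, and (iii) the centralized descent argument underlying Lemma 3.1, together with Remark \ref{rem:barriermethod} which links the minimizer of the barrier problem \eqref{eq:1BarrierProb2} to that of the original problem \eqref{eq:Problem3}.

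First, I would invoke Theorem 1 of \cite{chen2012distributed} on the auxiliary dynamics \eqref{eq:1EstDynamics1}. Because $\sum_{i=1}^{N}\kappa_{i}(0)=0$ is preserved by the skew-symmetric update, $\sum_{i=1}^{N}\nu_{i}(t)$ always equals the stacked aggregate $\sum_{i=1}^{N}\bigl[\partial L_{i}/\partial x_{i},\ \partial^{2}L_{i}/\partial x_{i}\partial t,\ \partial^{2}L_{i}/\partial x_{i}^{2}\bigr]^{\top}$. Under the assumed gain $c>\sup_{t}\|\kappa_{i}(x_{i},t)\|_{\infty}$, the sign-consensus protocol reaches agreement in some finite time $T$, whence \eqref{eq:1EstDyn4} holds. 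Substituting \eqref{eq:1EstDyn4} into the distributed protocol \eqref{eq:1DisEstLaw}, the common factor $1/N$ in numerator and denominator cancels and yields, for all $t\ge T$, exactly the centralized law \eqref{eq:1CentLaw_Collective-1}. The remaining analysis therefore reduces to the closed loop studied in the previous subsection.

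Second, I would apply Lemma \ref{lem:Boundedness} to conclude that $\bar{x}(t)$ is globally bounded under \eqref{eq:1CentLaw_Collective-1}. Boundedness of $\bar{x}$, combined with Assumption \ref{assum:1}.a and the fact that the derivatives $\partial L_{i}/\partial x_{i}$, $\partial^{2}L_{i}/\partial x_{i}\partial t$, $\partial^{2}L_{i}/\partial x_{i}^{2}$ remain bounded on strictly feasible trajectories (which is maintained by the logarithmic barrier), gives a finite bound $\omega_{0}$ for $|\omega_{i}-\omega_{j}|$. This is precisely the hypothesis needed to apply Lemma \ref{prop:consensustheorem}, which then produces a time $t_{1}\ge T$ and a $\delta_{0}$ (tunable arbitrarily small through $\beta_{2}$) beyond which $|x_{i}-x_{j}|\le\delta_{0}$.

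Third, to establish convergence to the optimum, I would view the closed loop as a perturbation of the ideal gradient-tracking law \eqref{eq:1CentLaw-1} applied to the collective barrier $Q(\bar{x},t)=\sum_{i=1}^{N}L_{i}(x_{i},t)$, with the saturation term $r_{i}$ acting as a vanishing-in-steady-state perturbation whose size is bounded by $\delta_{0}$. Using the same Lyapunov function $V=\tfrac12\|\partial Q/\partial\bar{x}\|^{2}$ as in the proof of Lemma 3.1, one obtains $\dot V\le -\|\partial Q/\partial\bar{x}\|^{2}+\text{(perturbation)}$ along trajectories; Barbalat's lemma (combined with the perturbed-system machinery of Lemma \ref{lem:stability}) then forces $\partial Q/\partial\bar{x}$ into a ball whose radius can be made arbitrarily small. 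Finally, Remark \ref{rem:barriermethod} with $\tau=t+1\to\infty$ ensures the minimizer of \eqref{eq:1BarrierProb2} converges to the minimizer of \eqref{eq:Problem3}, completing the argument.

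The main obstacle is the bookkeeping of two distinct perturbations acting simultaneously on the ideal centralized descent: the practical (not exact) consensus residual $r_{i}$ arising from the $\tanh$ saturation, and the time-varying drift induced by the shrinking barrier coefficient $\alpha/(t+1)$. Careful use of Lemma \ref{lem:stability} is needed to show that both perturbations are compatible with asymptotic convergence, and that strict feasibility $g_{i}(x_{i}(t))<0$ is preserved throughout so that the barrier-based gradients in \eqref{eq:1IntSignal1} remain well-defined.
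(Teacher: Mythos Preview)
Your outline diverges from the paper in one decisive place: you treat the saturation term $r_{i}$ as a bounded perturbation of the ideal descent and then try to absorb it via Lemma~\ref{lem:stability}, which only yields convergence of $\partial Q/\partial\bar{x}$ to a small ball, not to zero. The paper avoids this entirely by choosing a different Lyapunov function, namely the scalar
\[
V=\tfrac12\Bigl(\sum_{i=1}^{N}\nu_{i1}\Bigr)^{2}
=\tfrac12\Bigl(\sum_{i=1}^{N}\tfrac{\partial L_{i}}{\partial x_{i}}\Bigr)^{2},
\]
i.e.\ the square of the \emph{sum} of the local gradients rather than the componentwise norm $\tfrac12\|\partial Q/\partial\bar{x}\|^{2}$ you propose. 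Differentiating and inserting \eqref{eq:1DisEstLaw} gives
$\dot V=\bigl(\sum_{i}\nu_{i1}\bigr)\bigl(\sum_{i}\nu_{i3}u_{i}+\nu_{i2}\bigr)$,
and because the graph is undirected one has the exact identity $\sum_{i=1}^{N}r_{i}=0$: each pair $(i,j)$ contributes $\tanh\beta_{2}(x_{i}-x_{j})+\tanh\beta_{2}(x_{j}-x_{i})=0$. Together with $\nu_{i3}=\nu_{j3}$ for $t>T$, this produces the clean equality $\dot V=-\bigl(\sum_{i}\nu_{i1}\bigr)^{2}$ with no residual term at all, and Barbalat's lemma then gives $\sum_{i}\nu_{i1}\to 0$ exactly. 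The feasibility $g_{i}(x_{i}(t))<0$ is handled afterwards by a short contradiction argument using $\sum_{i}\nu_{i1}\in\mathcal{L}^{\infty}$.

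So the gap in your plan is not fatal but it misses the structural cancellation $\sum_{i}r_{i}=0$ that makes the perturbed-system machinery unnecessary and upgrades ``arbitrarily small ball'' to genuine asymptotic convergence. Your first step (finite-time estimator agreement reducing \eqref{eq:1DisEstLaw} to \eqref{eq:1CentLaw_Collective-1}) matches the paper; the detour through Lemmas~\ref{prop:consensustheorem} and~\ref{lem:Boundedness} to manufacture a bound on $r_{i}$ is where you should instead switch Lyapunov functions and exploit the odd symmetry of $\tanh$ over an undirected graph.
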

\begin{proof}
	{\crb Let us} define the following Lyapunov candidate function $V=\frac{1}{2}\left(\sum_{i=1}^{N}\nu_{i1}\right)^{2}.$
	After calculating time derivate of $V$, the following holds,
	\begin{align}
	\dot{V} & =\left(\sum_{i=1}^{N}\nu_{i1}\right)\left(\sum_{i=1}^{N}\nu_{i3}u_{i}+\nu_{i2}\right).
	\end{align}
	From \eqref{eq:1DisEstLaw} , we have
	\begin{eqnarray}
	\dot{V} & = & -\left(\sum_{i=1}^{N}\nu_{i1}\right)^{2},
	\end{eqnarray}
	in which we used the equalities $\nu_{i3}=\nu_{j3},\,\forall i,j\in\mathcal{{N}}$
	for $t>T$, and $\sum_{i=1}^{N}r_{i}=0$.  Hence, $\dot{V}\leq0,\,\forall t>T$.
	On the other hand, we assert that $x_{i}$, $i=1,\ldots,N$, stay
	bounded after a finite time as the agents' dynamics are locally Lipschitz
	and their inputs are bounded. This means that for $t\leq T$, we have
	$x_{i}\in\mathbb{{R}},\forall i \in \mathcal N $. Now, we can do stability analysis
	from $T$ onwards.

From the above inequality, it follows that ${\displaystyle \sum_{i=1}^{N}\nu_{i1}}$
remains bounded in $\mathbb{{R}^{\text{n}}\bigcup}\{\infty\}$, i.e.
it belongs to $\mathcal{{L}^{\infty}}$ space. With integrating from
both sides of equality \eqref{eq:Lemma1_1}, in the view of passivity
of $V(x,t)$, we have
\begin{gather*}
\int_{0}^{R}\left(\sum_{i=1}^{N}\nu_{i1}\right)^{2}dt=-\int_{0}^{R}\dot{V}dt\\
=-V(R)+V(0)\leq V(0).
\end{gather*}
Therefore, $\sum_{i=1}^{N}\nu_{i1}\in\mathcal{{L}}^{2}$.  By means
of Barbalat's lemma \cite{tao1997simple}, we have $\sum_{i=1}^{N}\nu_{i1}=0$
as $t\rightarrow\infty$. Thereby, the first optimality condition
in \eqref{eq:1OptCondition} is asymptotically satisfied.

\textcolor{black}{We now illustrate  that the second optimality condition
	in \eqref{eq:1OptCondition} also holds. Suppose that $g_{i}\left(x_{i}(0)\right)<0$
	$\forall i$. We  do the proof by contradiction to establish
	that $g_{i}\left(x_{i}(t)\right)<0$ for $t>0$ holds. Assume that we had
	$g_{i}\left(x_{i}(t_{1}^{-})\right)<0$ and $g_{i}\left(x_{i}(t_{1}^{+})\right)>0$
	for some $i$ and a finite $t_{1}>0$. Due to continuity of the function
	$g_{i}\left(\cdot\right)$, $g_{i}(x_{i}(t_{1}))$ would be zero.
	This implies that $\sum_{i=1}^{N}\nu_{i1}$ becomes unbounded at $t_{1}$
	that contradicts the fact that $\sum_{i=1}^{N}\nu_{i1}\in\mathcal{{L}^{\infty}}$.
	Hence, the inequality $g_{i}(x_{i}(t))<0$ with $g_{i}\left(x_{i}(0)\right)<0$
	holds for $t>0$. This ends the proof. }
\end{proof}

\subsection{Numerical Example}

This section presents simulation studies using Matlab/ Simulink software
for a network of four agents with dynamics according to  \eqref{eq:1SingIntDym-1}
driven by the proposed distributed algorithm \eqref{eq:1DisEstLaw}.
We consider the constrained convex optimization problem \eqref{eq:min Prob}
with obejctive functions $f_{1}(x)=(x+2)^{2}$, $f_{2}(x)=x^{2}$,
$f_{3}(x)=(x-10)^{2}$, and $f_{4}(x)=(x-2)^{2}$and constraints $g_{1}(x)=x-1$,
$g_{2}(x)=x-2$, and $g_{3}(x)=x-4$. Note that these functions are
all smooth and convex. In our simulation, the information sharing
graph $\mathcal{{G}}$ is set as: $1\Leftrightarrow2\Leftrightarrow3\Leftrightarrow4$.
The initial conditions are set as $x_{0}=\left[\begin{array}{cccc}
0 & 1 & 3 & 0\end{array}\right]^\top.$ The evolution of agents' trajectories is depicted in the Fig 1. As
shown, all agents meet each other and, then, converge towards the
optimal point of the collective objective function which is one in this case.

\section{Conclusions}\label{sec:conclude}

We investigated the problem of \textcolor{black}{distributed optimization
	f}or undirected networks of single-integrator agents. Here, agents
shall reach an agreed point that minimizes a collective convex objective
function with respect to local inequality constraints. A centralized
control law, which yields the optimal solution to this problem, and   consists of\textcolor{black}{{} a saturation consensus
	part and an optimization part based on the interior-point method was
	proposed}. \textcolor{black}{To illustrate the convergence of the
	proposed algorithm, we first established that the proposed consensus
	protocol provides practical consensus, i.e. all agents will have the
	same decision eventually, perhaps with a small admitted error. We then
	suggested a distributed estimator as a tool to estimate some terms
	within the protocol associated with the global knowledge, which is only partially available to agents with the network.
	It was proved that the presented distributed algorithm converges to
	the solution of the original constrained convex optimization problem. }Finally,
to evaluate the performance of our work, a numerical example was presented.

\begin{figure}[tbh]
	\includegraphics[scale=0.12]{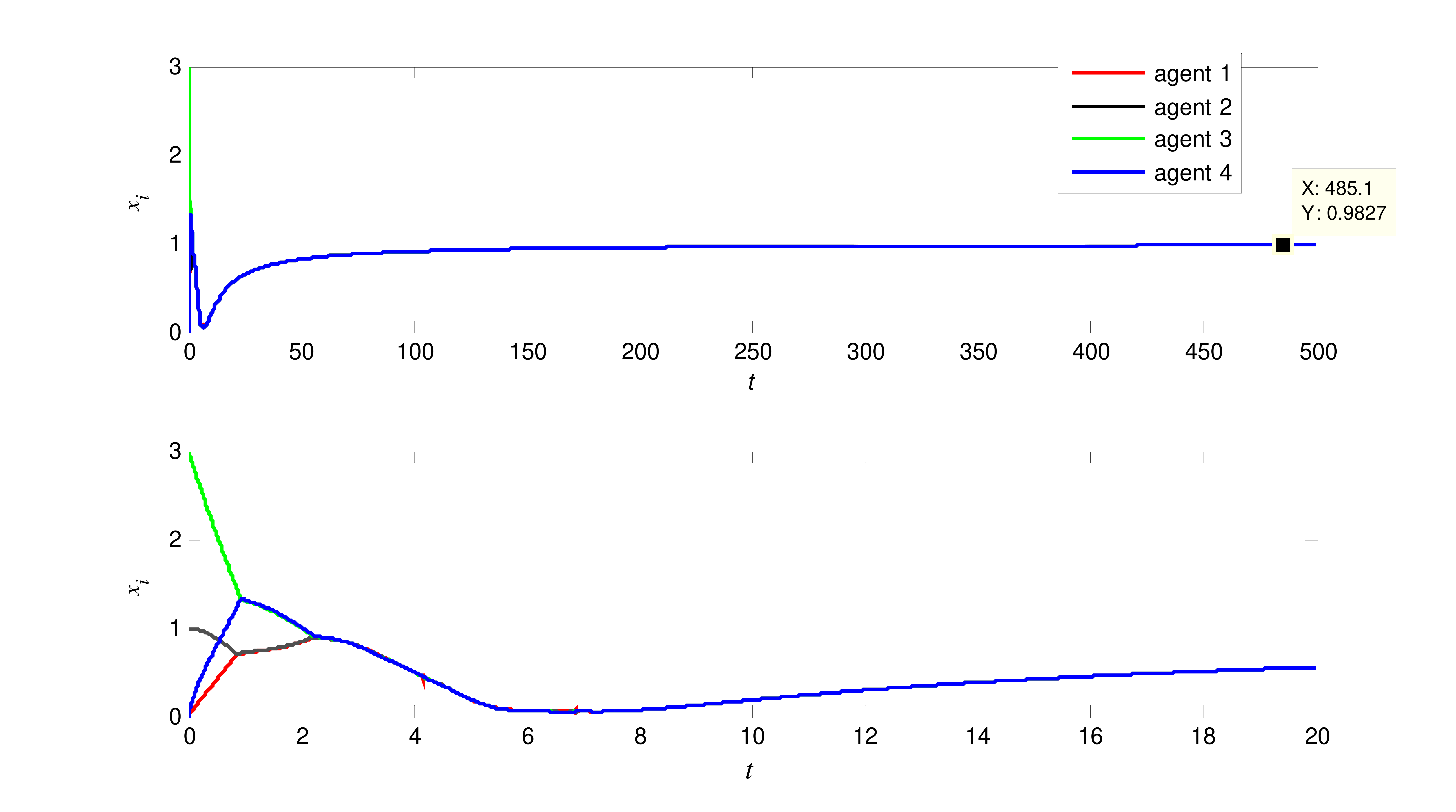}
	
	\caption{Trajectories of single-integrator agents towards the optimal point
		under the protocol \eqref{eq:1DisEstLaw}.}
\end{figure}

\bibliographystyle{plain}
\bibliography{paper2}

\begin{thebibliography}{10}

\bibitem{boyd2004convex}
S.~Boyd and L.~Vandenberghe.
\newblock {\em Convex optimization}.
\newblock Cambridge university press, 2004.

\bibitem{chen2012distributed}
F.~Chen, Y.~Cao, and W.~Ren.
\newblock Distributed average tracking of multiple time-varying reference
  signals with bounded derivatives.
\newblock {\em IEEE Transactions on Automatic Control}, 57(12):3169--3174,
  2012.

\bibitem{droge2014continuous}
C.~Droge, H.~Kawashima, and M.~B. Egerstedt.
\newblock Continuous-time proportional-integral distributed optimisation for
  networked systems.
\newblock {\em Journal of Control and Decision}, 1(3):191--213, 2014.

\bibitem{feijer2010stability}
D.~Feijer and F.~Paganini.
\newblock Stability of primal--dual gradient dynamics and applications to
  network optimization.
\newblock {\em Automatica}, 46(12):1974--1981, 2010.

\bibitem{horn2012matrix}
R.~Horn and C.~R. Johnson.
\newblock {\em Matrix analysis}.
\newblock Cambridge university press, 2012.

\bibitem{khalil1996nonlinear}
H.~K. Khalil.
\newblock {\em Nonlinear Systems}.
\newblock Prentice Hall, 1996.

\bibitem{kia2015distributed}
S.~Kia, J.~Cort{\'e}s, and S.~Mart{\'\i}nez.
\newblock Distributed convex optimization via continuous-time coordination
  algorithms with discrete-time communication.
\newblock {\em Automatica}, 55:254--264, 2015.

\bibitem{lu2012zero}
J.~Lu and Y.~C. Tang.
\newblock Zero-gradient-sum algorithms for distributed convex optimization: The
  continuous-time case.
\newblock {\em IEEE Transactions on Automatic Control}, 57(9):2348--2354, 2012.

\bibitem{nedic2009distributed}
A.~Nedic and A.~Ozdaglar.
\newblock Distributed subgradient methods for multi-agent optimization.
\newblock {\em IEEE Transactions on Automatic Control}, 54(1):48--61, 2009.

\bibitem{polycarpou1993robust}
M.~M. Polycarpou and P.~A. Ioannou.
\newblock A robust adaptive nonlinear control design.
\newblock In {\em American Control Conference, 1993}, pages 1365--1369. IEEE,
  1993.

\bibitem{qiu2016distributed}
Z.~Qiu, S.~Liu, and L.~Xie.
\newblock Distributed constrained optimal consensus of multi-agent systems.
\newblock {\em Automatica}, 68:209--215, 2016.

\bibitem{raffard2004distributed}
R.~L. Raffard, C.~J. Tomlin, and S.~P. Boyd.
\newblock Distributed optimization for cooperative agents: Application to
  formation flight.
\newblock In {\em Decision and Control, 2004. CDC. 43rd IEEE Conference on},
  volume~3, pages 2453--2459. IEEE, 2004.

\bibitem{tao1997simple}
G.~Tao.
\newblock A simple alternative to the barbalat lemma.
\newblock {\em IEEE Transactions on Automatic Control}, 42(5):698, 1997.

\bibitem{wang2011control}
J.~Wang and N.~Elia.
\newblock A control perspective for centralized and distributed convex
  optimization.
\newblock In {\em 2011 50th IEEE Conference on Decision and Control and
  European Control Conference}, pages 3800--3805. IEEE, 2011.

\bibitem{yi2015distributed}
P.~Yi, Y.~Hong, and F.~Liu.
\newblock Distributed gradient algorithm for constrained optimization with
  application to load sharing in power systems.
\newblock {\em Systems \& Control Letters}, 83:45--52, 2015.

\bibitem{yuan2011distributed}
D.~Yuan, S.~Xu, and H.~Zhao.
\newblock Distributed primal--dual subgradient method for multiagent
  optimization via consensus algorithms.
\newblock {\em IEEE Transactions on Systems, Man, and Cybernetics, Part B
  (Cybernetics)}, 41(6):1715--1724, 2011.

\end{thebibliography}
\end{document}